\documentclass[a4paper,12pt]{article}

\usepackage{color}
\usepackage[pdftex]{graphicx}
\usepackage{amsmath,amssymb,amsthm}
\usepackage{wrapfig}
\usepackage{ifpdf}


\textwidth 6.5in

\textheight 8.4in

\hoffset -2.0cm

\voffset -1.5cm

\sloppy

\frenchspacing
\righthyphenmin=2

\newtheorem{theorem}{Theorem}

\newtheorem{lemma}{Lemma}

\newcommand{\sgn}{\mathop{\rm sign}\nolimits}

\newcommand{\nl}{\mathop{\rm NL}\nolimits}

\DeclareMathOperator{\dg2}{deg_2}


\title {Sperner type lemma for quadrangulations}

\author {Oleg R. Musin\thanks{This research is partially supported by NSF grant DMS - 1101688.}}

\begin{document}

	\ifpdf \DeclareGraphicsExtensions{.pdf, .jpg, .tif, .mps} \else
	\DeclareGraphicsExtensions{.eps, .jpg, .mps} \fi	
	
\date{}
\maketitle

\begin{abstract}  Sperner's lemma states that every Sperner coloring of a triangulation of a simplex contains a fully colored simplex. We present  a generalization of this lemma where instead of triangulations are considered quadrangulations.   
\end{abstract}

\medskip

\noindent {\bf Keywords:} Sperner's lemma, quadrangulation,  degree of mapping

\section{Introduction}



{\it Sperner's lemma} is   a statement about labellings (colorings) of triangulated simplices ($d$-balls). It is a discrete analog of the Brouwer fixed point theorem.

Let $S$ be a $d$-dimensional simplex  with vertices $v_1,\ldots,v_{d+1}$. Let $T$ be a triangulation of $S$. Suppose that each vertex of $T$ is assigned a unique label from the set $\{1,2,\ldots,d+1\}$. A labelling $L$ is called {\it Sperner's} if  the vertices are labelled in such a way that a vertex   of $T$ belonging to the interior of a face $F$ of $S$ can only be labelled by $k$ if $v_k$ is on $S$.   

 \medskip

\noindent{\bf (Sperner's lemma \cite{Sperner})} 
	{\it Every Sperner labelling of a triangulation of a $d$-dimensional simplex contains a cell labelled with a complete set of labels: $\{1,2,\ldots, d+1\}$.}

\medskip

The two-dimensional case is the one referred to most frequently. It is stated as follows:\\
{\it Given a triangle $ABC$, and a triangulation $T$ of the triangle. The set $V(T)$ of vertices of $T$ is colored with three colors in such a way that 
(i) $A, B$ and $C$ are colored 1, 2 and 3 respectively; 
(ii)   Each vertex on an edge of $ABC$ is to be colored only with one of the two colors of the ends of its edge. Then there exists a triangle from $T$, whose vertices are colored with the three different colors.}

\medskip

There are several extensions of Sperner' lemma for quadrangulations (cubical decompositions), see   \cite{KyFan60,Kuhn,Shashkin,Wolsey}. In particular, Ky Fan \cite{KyFan60} proved that any Sperner 0--1 labelling of a $d$-pile with the {neighbourhood property} contains a fully labelled $d$-cube.   
Shashkin \cite{Shashkin} extended Ky Fan's formula for local degrees of simplicial maps in \cite{KyFan67} to cubical maps. (Actually, he proved it only for small dimensions $d\le4$.) 

\medskip

In two dimensions a {\it quadrangulation}  is the division of a surface or plane polygon into a set of quadrangles with the restriction that each quadrangle side  either is entirely shared by two adjacent quadrangles or lies on the boundary. 

Our main example is the {\it pile of cubes} $\Pi_d(n_1,\ldots,n_d)$, see details in \cite[Chapter 5]{Ziegler}. It is the polytopal complex formed by all unit cubes with integer vertices in the $d$-box 
$$
B(n_1,\ldots,n_d):=\{x=(x_1,\ldots,x_d) \in {\Bbb R}^d:0\le x_i\le n_i \mbox{ for } 1\le i \le d\}. 
$$
So each $k$-cell, $0\le k\le d$, of this complex is a $k$-dimensional unit cube.

In Section 2 we consider Theorem 1 (oriented case) and Theorem 2 (non-oriented case). Both theorems  yield the following extension of Sperner's lemma for  piles in two dimensions.

\medskip

\noindent{\bf Theorem A.}
{\it Given a pile $P:=\Pi_2(m,n)$ with corners $ABCD$.  The set  of vertices of $P$ is colored with four colors in such a way that $A, B,C$ and $D$ are colored 1, 2, 3 and 4 respectively; and 
each vertex on an edge of $ABCD$ is to be colored only with one of the two colors of the ends of its edge. Then either there exists a quadrangle from $P$, whose vertices are colored with the four different colors or there exists an edge of $P$ whose two ends are colored with $(1,3)$ or $(2,4)$.}

\medskip

\noindent{\bf Remark.} In fact, for a 2-pile the {\it neighbourhood property} in \cite{KyFan60} means that there are no edges whose two vertices are colored with $[1,3]$ or $[2,4]$. So Theorem A first proved by Ky Fan.

\medskip
 
\medskip

\begin{center}
\begin{picture}(320,160)(-100,-80)
\put(-130,-70){Figure 1: Sperner's labelling of $\Pi_2(4,3)$.  One edge is colored with $(1,3)$.}

\multiput(-70,-40)(60,0){5}%
{\line(0,1){120}}

\multiput(-70,-40)(0,40){4}%
{\line(1,0){240}}

\put(-80,-50){$A$}
\put(173,-50){$B$}
\put(173,80){$C$}
\put(-83,80){$D$}

\put(-66,-36){1}
\put(160,-36){2}
\put(160,68){3}
\put(-66,68){4}

\put(-66,4){1}
\put(-66,44){4}

\put(160,4){2}
\put(160,44){3}

\put(-6,-36){2}
\put(-6,4){1}
\put(-6,44){4}
\put(-6,68){3}

\put(54,-36){2}
\put(54,4){1}
\put(54,44){3}
\put(54,68){3}

\put(114,-36){1}
\put(114,4){2}
\put(114,44){3}
\put(114,68){4}

{\linethickness{0.7mm}
\put(50,0){\line(0,1){40}}
}
 
\end{picture}
\end{center}

\medskip

We will call a quadrangles (2-cell)  whose vertices are colored with the four different colors [1,2,3,4]  and   edges (1-cells) whose two vertices are colored with $[1,3]$ or $[2,4]$ as {\it centrally labelled} cells. 

The main reason for this name is the following. 
Let $C$ be a quadrangle whose vertices $V(C):=v_1,v_2,v_3$ and $v_4$ that are labelled with 1, 2, 3 and 4 respectively. Then among all cells with vertices in $V(C)$ there are only three cells: $v_1v_2v_3v_4$ (2-cell), and two diagonals (1-cells) $v_1v_3$ and $v_2v_4$ that contain the center of $C$ inside.

We can say it in another way. We assume that the colors 1, 2, 3, 4 correspond to the labels $(-1,-1), (1,-1),(1,1),(-1,1)$ respectively. Then centrally labelled cells are cells with zero sum of its labels.

In two dimensions, the labelling in Theorem A is called Sperner's labelling.  
In other words, Theorem A states:

\medskip

\noindent{\it Any Sperner labelling of the pile $\Pi_2(m,n)$ contains at least one centrally labelled cell.}

\medskip

Theorem A can be extended for all dimensions. Let $C^d=\Pi_d(1,\ldots,1)$ denote the unit cube in ${\Bbb R}^d$.  The  vertex set of $C^d$ consists of all the $2^d$ binary $d$-tuples, and where two
vertices are adjacent if the corresponding $d$-tuples differ in exactly one coordinate position. Therefore, there is one-to-one correspondence between the vertex set $V(C^d)$ and the set  of binary numbers from 0 to $2^d-1$, i. e. the labelling $c:V(C^d)\to \{0,1,\ldots,2^d-1\}$ is bijection.

Note that corners of the pile $P:=\Pi_d(n_1,\ldots,n_d)$ can be considered as vertices of  $C^d$. 
We say that a labelling  $L:V(P)\to V(C^d)$ is {\it Sperner's} if the set of corners of  $P$ are labelled with the correspondent labels from $V(C^d))$ and vertices on the boundary of $P$ are labelled in such a way that a vertex belonging to the interior of a cell (face) $F$ of $C^d$ can be only labelled with $v$ where $v$ is a vertex of $F$.  

In Section 2 are defined centrally labelled cells for all dimensions, see Definition 2. 

\medskip

{\bf Theorem B.} {\it Any Sperner labelling of the pile $\Pi_d(n_1,\ldots,n_d)$ contains at least one centrally labelled cell.}

\medskip

Now extend Theorem A for quadrangulations.  We found a weaker assumption than Sperner's coloring. 

Let $L:V\to\{1,2,3,4\}$ be a labelling of a set $V:=\{v_1,\ldots, v_m\}$ in  a circle such that any two adjacent vertices cannot have labels (1,3) or (2,4).  Let  $$\deg([1,2],L):=p_*-n_*,$$ where $p_*$ (respectively, $n_*$) is the number of (ordering) pairs $(v_k,v_{k+1})$ such that $L(v_k)=1$ and $L(v_{k+1})=2$ (respectively, $L(v_k)=2$ and $L(v_{k+1})=1$). 

\medskip

For instance, let $L=(12212341234211223412341)$. Then $p_*=5$ and $n_*=2$. Thus, $\deg([1,2],L)=5-2=3.$

\medskip

Note that, instead of $[1,2]$ we can take $[2,3],\, [3,4]$ or $[4,1]$. Namely, we have  $$\deg([1,2],L)=\deg([2,3],L)=\deg([3,4],L)=\deg([4,1],L).$$
This  fact proved in \cite[Lemma 2.1]{MusArSp}.

Let $Q$ be a quadrangulation of a  polygon $M$. Denote by $\partial Q$ the boundary of $Q$. Then $\partial Q$ is a polygonal contour with vertices $v_{1},\ldots,v_m$ that can be considered as points in a circle.  (We assume that these vertices are in counterclockwise order.) 

Let $L:V(Q)\to\{1,2,3,4\}$ be a labelling. This labelling implies the labelling  $L_0:\partial Q\to\{1,2,3,4\}$. We  consider the set of  {\it neighboring labellings} $\nl(\partial Q)$. We write $L\in\nl(\partial Q)$ if  any edge on the boundary $\partial Q$ have no labels (1,3) or (2,4). Then $\deg([1,2],L_0)$ is well defined.  Denote $$\deg(L,\partial Q):=\deg([1,2],L_0).$$

\medskip

{\bf Theorem C.} {\it Let $Q$ be a quadrangulation of a  polygon $M$. Suppose $L:V(Q)\to\{1,2,3,4\}$ be a labelling such that $L\in\nl(\partial Q)$. Then $Q$ contains at least $|\deg(L,\partial Q)|$ centrally labelled cells.}

\medskip

\medskip
 
\medskip

\begin{center}
\begin{picture}(320,160)(-100,-80)
\put(-130,-70){Figure 2: Since $\deg(L,\partial Q)=2$,  there are  two centrally labelled cells.}

\multiput(-70,-40)(60,0){5}%
{\line(0,1){120}}

\multiput(-70,-40)(0,40){4}%
{\line(1,0){240}}

\put(-70,40){\line(-1,0){60}}
\put(-70,0){\line(-1,0){60}}
\put(-130,40){\line(0,-1){40}}

\put(-66,-36){1}
\put(160,-36){1}
\put(160,68){1}
\put(-66,68){1}

\put(170,80){\line(1,0){60}}
\put(170,40){\line(1,0){60}}
\put(170,0){\line(1,0){60}}
\put(230,0){\line(0,1){80}}

\put(220,4){1}
\put(220,44){1}
\put(220,68){2}

\put(-126,4){3}
\put(-126,28){3}

\put(-66,4){2}
\put(-66,44){2}

\put(160,4){4}
\put(160,44){1}

\put(-20,-36){2}
\put(-20,4){1}
\put(-6,44){1}
\put(-6,68){1}

\put(54,-36){3}
\put(54,4){4}
\put(54,44){1}
\put(54,68){3}

\put(114,-36){4}
\put(114,4){1}
\put(114,44){1}
\put(114,68){2}

{\linethickness{0.7mm}
\put(50,40){\line(0,1){40}}
}

\multiput(0,-30)(10,0){5}%
{\circle*{2}} 
\multiput(0,-20)(10,0){5}%
{\circle*{2}}
\multiput(0,-10)(10,0){5}%
{\circle*{2}}

{\linethickness{0.5mm}
\put(-10,-40){\line(1,0){60}}
\put(-10,0){\line(1,0){60}}
\put(-10,-40){\line(0,1){40}}
\put(50,-40){\line(0,1){40}}
}
 
\end{picture}
\end{center}

\medskip

Theorem 1 in Section 2 extends Theorem C for all dimensions. Note that if $L$ is a Sperner labeling, then $|\deg(L,\partial Q)|=1$. So Theorem C implies Theorem A and  from Theorem 1 follows Theorem B.

\section{Main results}

Here we consider the main theorem and its corollaries for a very general class of spaces $M$. One of very natural extensions of Theorem C is the case when $M$ is a polytope in ${\mathbb R}^d$.
 
In our papers \cite{Mus,MusSpT,MusKKM,MusArSp} we studied a more general case when $M$ is a piece-wise linear  manifold. 
In this case, if $M$ is a compact oriented  manifold with boundary, then from the one side it extends Theorem C for all dimensions and to a huge class of spaces (even in two dimensions), on the other side, almost all proofs can be easily transfer for this case. 

All results in this section hold for manifolds that admit quadrangulations. The class of such manifolds  is called  {piece-wise linear (PL)  manifolds.} Note that  a smooth manifold can be triangulated and quadrangulated, therefore it is also a PL manifold. However, there are topological manifolds  that do not admit a triangulation and therefore do not admit a quadrangulation. 
 
Every PL manifold $M$ admits a {\it quadrangulation:} that is, we can find a collection of cells  $Q$ of dimensions $0, 1,\ldots, d$, such that (1) any cell of dimension $k$ is homeomorphic to the $k$-cube $C^k$,  (2) any face of a cell belonging to $Q$ also belongs to $Q$, (3) any nonempty intersection of any two cells of $Q$ is a face of each, and (4) the union of the cells of $Q$ is $M$. (See details in \cite{Bryant}.)  Actually, a PL--manifold $M$ can be quadrangulated by many ways. 

Throughout this paper by word {\it manifold} we assume a compact PL manifold with or without boundary.  

Let $Q$ be a quadrangulation of $M$.  The vertex set of $Q$, denoted by $V(Q)$, is the union of the vertex sets of all cells of $Q$. 

\medskip 

Now for any labelling $c:V(C^n)\to \{0,1,\ldots,2^d-1\}$ we define a map $f_L:C^n\to C^d$. Since we have the bijection $c:V(C^d)\to \{0,1,\ldots,2^d-1\}$ this labelling can be considered as a map $L:V(C^n)\to V(C^d)$. 

\begin{lemma} For any labelling $L:V(C^n)\to V(C^d)$  we may uniquely define a multilinear  map $f_L:C^n\to C^d$ such that for each vertex $v\in V(C^n)$ we have $f_L(v)=L(v)$. 
\end{lemma}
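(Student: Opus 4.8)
The plan is to write down $f_L$ explicitly by multilinear Lagrange interpolation, then check two things: that the interpolant actually maps $C^n$ into $C^d$ (not merely into $\mathbb{R}^d$), and that it is the only multilinear map doing the job. View $C^n=[0,1]^n$ with vertex set $V(C^n)=\{0,1\}^n$, and call a real function on $C^n$ \emph{multilinear} if it is a polynomial of degree at most one in each variable separately. For each $v=(v_1,\dots,v_n)\in V(C^n)$ introduce the bump function
$$
\lambda_v(x_1,\dots,x_n):=\prod_{i:\,v_i=1}x_i\ \prod_{i:\,v_i=0}(1-x_i),
$$
which is manifestly multilinear and satisfies $\lambda_v(w)=1$ if $w=v$ and $\lambda_v(w)=0$ for every other vertex $w$, together with $\sum_{v\in V(C^n)}\lambda_v\equiv1$ and $0\le\lambda_v\le1$ on $C^n$ (the last from $x_i+(1-x_i)=1$). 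Then set, coordinatewise,
$$
f_L:=\sum_{v\in V(C^n)}L(v)\,\lambda_v .
$$
Each coordinate of $f_L$ is a linear combination of the multilinear functions $\lambda_v$, hence multilinear, and $f_L(v)=L(v)$ for every vertex $v$ by the interpolation property.

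\textbf{The image lies in $C^d$.} I would use that a multilinear real function on a box is affine, hence monotone, in each coordinate when the remaining ones are fixed; maximising one coordinate at a time, one sees that such a function attains its maximum and minimum over $C^n$ at vertices of $C^n$. Applying this to each of the $d$ coordinates of $f_L$ and using that every vertex value $L(v)$ lies in $V(C^d)=\{0,1\}^d$, we get $f_L(x)\in[0,1]^d=C^d$ for all $x\in C^n$, so $f_L$ is indeed a map $C^n\to C^d$.

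\textbf{Uniqueness.} The multilinear real functions of $n$ variables form a vector space spanned by the squarefree monomials $\prod_{i\in S}x_i$, $S\subseteq\{1,\dots,n\}$, of dimension $2^n=|V(C^n)|$; the "evaluation at vertices" map $g\mapsto\bigl(g(v)\bigr)_{v\in V(C^n)}$ is linear between spaces of equal finite dimension, so it is enough to prove it injective, i.e. that a multilinear $g$ vanishing on all of $V(C^n)$ is identically $0$. This I would do by induction on $n$: for $n\le1$ an affine function vanishing at $0$ and $1$ is zero, and in general we may write $g=(1-x_n)\,g_0(x_1,\dots,x_{n-1})+x_n\,g_1(x_1,\dots,x_{n-1})$ with $g_0=g|_{x_n=0}$ and $g_1=g|_{x_n=1}$ multilinear in $n-1$ variables and vanishing on $V(C^{n-1})$, whence $g_0\equiv g_1\equiv0$ by induction and $g\equiv0$. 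Applying this coordinate by coordinate to the difference of two multilinear maps that agree on $V(C^n)$ gives uniqueness of $f_L$. Nothing here is deep; the only step needing a little attention is the check that the image lies in $C^d$, which is exactly the vertex-extremum property of multilinear functions invoked above, and everything else is the standard interpolation dimension count made effective by the induction on the last coordinate.
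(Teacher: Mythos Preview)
Your argument is correct and in fact more complete than the paper's. The paper works in the monomial basis: writing a multilinear function as $\sum_{w} a_w x^w$ with $x^w=\prod_i x_i^{w_i}$, it solves for the $a_w$ from the vertex values $b_w=F(w)$ and records the closed form $a_w=\sum_{u\preceq w}(-1)^{h(u,w)}b_u$ (a M\"obius inversion over the Boolean lattice). You instead use the Lagrange/nodal basis $\lambda_v$ and get the interpolant directly as a convex combination of the vertex labels. Both routes amount to the same linear-algebra fact that evaluation at the $2^n$ vertices is an isomorphism on the $2^n$-dimensional space of multilinear polynomials; the paper's version yields explicit monomial coefficients, while yours makes the convexity structure visible. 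In particular, you verify that $f_L(C^n)\subseteq C^d$ via the vertex-extremum property --- a point the paper's proof does not address at all --- and you spell out uniqueness by the induction on $n$, whereas the paper leaves uniqueness implicit in the invertibility of the coefficient recursion.
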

\begin{proof} A multilinear map is a map of several variables that is linear separately in each variable. Actually, $f=(f_1,\ldots,f_d)$ is a multilinear map if each function $f_i(x_1,\ldots,x_n)$ is a multilinear function. A function $F(x_1,\ldots,x_n)$ is multilinear if 
$$
F(x_1,\ldots,x_n)=\sum\limits_{0\le w\le 2^n-1}{a_wx^w},$$ 
 where for $w=(p_1,\ldots,p_n),$ \, $p_k=0$ or 1,    
$
x^w:=x_1^{p_1}\cdot\cdot\cdot x_n^{p_n}. 
$

So a linear function  $F(x)=a_0+a_1x$ and a bilinear 
$$
F(x_1,x_2)=a_{00}+a_{01}x_1+a_{10}x_2+a_{11}x_1x_2.
$$

If a multilinear function $F(x_1,\ldots,x_n)$ is defined in $V(C^n)$, i. e. for $w\in V(C^n)$, $F(w)=b_w$ then the coefficients $a_w$ can be found. Indeed, for $n=1$ we have 
$$a_0=b_0, \; a_1=b_1-b_0.$$
For $n=2$: $$a_{00}=b_{00}, \, a_{01}=b_{01}-b_{00},\, a_{10}=b_{10}-b_{00},  \, a_{11}=b_{11}-b_{10}-b_{01}+b_{00}.$$
By induction these coefficients can be found for all $a_w$.  

 Let $u=(u_1,\ldots,u_n)$ and $w=(w_1,\ldots,w_n)$ be two binary numbers (0 and 1 strings). We write $u\preceq w$ if $u_i\le w_i$ for all $i=1,\ldots,n$. Denote by $h(u,w)$ the Hamming distance between $u$ and $w$, i. e. the number of digits in positions where they have different digit. Then 
$$
a_w=\sum\limits_{u\in V(C^n):u\preceq w}{(-1)^{h(u,w)}b_u}. 
$$
So all coefficients of the map $f_L:C^n\to C^d$ can be explicitly found. 
\end{proof}

\medskip 

\noindent{\bf Definition 1.} Let $Q$ be a quadrangulation of a $d$-dimensional manifold $M$. Let $L:V(Q)\to V(C^d)$ be a labelling. Every $n$-cell $\sigma$ in $Q$ is homeomorphic to $C^n$. So by Lemma 1 $f_L:\sigma\to C^d$ is uniquely defined.  It defines a piece-wise multilinear map $f_L:Q\to C^d$. 

\medskip

\noindent{\bf Definition 2.} Let $Q$ be a quadrangulation of a $d$-dimensional manifold $M$. Let $L:V(Q)\to V(C^d)$ be a labelling. 
We say that a cell $\sigma\in Q$ is {\it centrally labelled} if there is an internal point $x$ in $\sigma$ such that $f_L(x)=z$, where $z=(1/2,\ldots,1/2)$ is the center of $C^d$. 

\medskip	

\noindent{\bf Examples.} It is easy to see that if $d=2$, then there are two centrally labelled 1-cells with labels in the ends:  [(0,0),(1,1)] and [(0,1),(1,0)] and one 2-cell with labels  [(0,0),(1,0),(1,1),(0,1)]. 

For $d=3$ the set of centrally labelled cells is more complicated. We have three 1-cells with labels $[u,v]$, where $u+v=(1,1,1)$. There are two types of centrally labelled 2-cells: [(0,0,0),(1,0,0),(1,1,1),(0,1,1)] and [(0,0,0),(1,1,0),(1,0,1),(0,1,1)]. 

One of centrally labelled 3-cells is fully labelled, i. e. when $L:VC^3)\to V(C^3)$ and $L(w)=w$ for all $w\in V(C^3)$. There is a non-symmetric example. Let $L(w)=w$ for all $w\in V(C^3), \, w\ne(1,1,1)$, and $L(1,1,1)=(1,1,0)$. It is also a centrally labelled 3-cell. 

\medskip

An {\it oriented cube} is a cube $C^k$ together with a choice of one of its two possible orientations. 
An {\it oriented quadrangulation} $Q$ of an oriented $d$-dimensional manifold $M$ is $Q$ equipped with an orientation  on each $d$-cell and such that any two $d$-cells with a common $(d-1)$-face must have the same orientation. 
 
 The {\it degree of a continuous mapping} or {\it Brouwer's degree}  between two compact oriented manifolds of the same dimension is a number that represents the number of times that the domain manifold wraps around the range manifold under the mapping.   It is well known that the degree is a topological invariant (see, for instance, \cite{Milnor} and \cite[pp. 44--46]{Mat}).

Let us define the degree more rigorously. Let $f:M_1\to M_2$ be a continuous (piece-wise smooth) map, where $M_1$ and $M_2$ are compact $d$-dimensional manifolds without boundary. Let $y\in M_2$ be a regular value of $f$, that means $f^{-1}(y)=\{x_1,\ldots,x_n\}$ and  in a neighborhood of each $x_i$ the map $f$ is a local diffeomorphism. Then 
$$
\deg(f,y):=\sum\limits_{x\in f^{-1}(y)}{\sgn\,\det Df(x)},
$$
where $Df(x)$ is the Jacobi matrix of $f$ in $x$.
Actually, $\deg(f,y)$ does not depend on $y$ and so the value $\deg(f)$ is well defined. 

In a similar way, we could define the degree of a map between compact oriented manifolds with boundary. Let $M_i, \, i=1,2,$ be a compact manifolds with boundary $\partial M_i$. Suppose that $f:M_1\to M_2$ is such that $f(\partial M_1)\subseteq\partial M_2$. Denote $f^\partial:=f|_{\partial M_1}:\partial M_1\to\partial M_2$. It is well known that in this case 
$$
\deg(f)=\deg(f^\partial). 
$$ 
(A proof of this fact also can be fined in our paper \cite{MusArSp}, see the proof of Theorems 2.1 and Theorem 4.1.)

\medskip

\noindent{\bf Definition 3.} Let $Q$ be a quadrangulation of an oriented $d$-dimensional manifold $M$. Let $L:V(Q)\to V(C^d)$ be a labelling. We say that $L$ is a {\it neighboring labelling} and write $L\in\nl(\partial Q)$ if $f_L(\partial Q)\subseteq \partial C^d$. 

If $f_L(\partial Q)\subseteq \partial C^d$, then $f_L^\partial:\partial Q\to\partial C^d$ is well defined. Denote $\deg(f_L^\partial)$ by $\deg(L,\partial Q)$.  If $\partial Q$ is empty, i. e. $M$ is a manifold without boundary, then we set $\deg(L,\partial Q)=0$.

\begin{theorem} \label{SpQ} Let $Q$ be a quadrangulation of an oriented $d$-dimensional manifold $M$. Suppose $L:V(Q)\to V(C^d)$ be a labelling such that $L\in\nl(\partial Q)$. Then $Q$ contains at least $|\deg(L,\partial Q)|$ centrally labelled cells.
\end{theorem}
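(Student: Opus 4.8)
The plan is to recast everything in terms of the Brouwer degree of the piecewise multilinear map $f_L$ and then to count the points of $f_L^{-1}(z)$ one cell at a time.

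First I would set up the degree. By Lemma~1 the labelling $L$ determines a piecewise multilinear map $f_L\colon M\to C^d$, and by Definition~3 the hypothesis $L\in\nl(\partial Q)$ says precisely that $f_L(\partial M)\subseteq\partial C^d$, so $f_L^\partial$ is defined and $\deg(L,\partial Q)=\deg(f_L^\partial)$. Since $M$ and $C^d$ are compact oriented $d$-manifolds, $f_L$ is continuous and piecewise smooth, and $f_L$ carries $\partial M$ into $\partial C^d$, the degree $\deg(f_L)$ at the interior point $z=(1/2,\dots,1/2)$ is well defined, and the boundary formula recalled before the theorem gives $\deg(f_L)=\deg(f_L^\partial)=\deg(L,\partial Q)$. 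Reversing the orientation of $M$ if necessary, I may assume $D:=\deg(L,\partial Q)\ge0$; the case $D=0$ is vacuous, so I assume $D\ge1$, and it remains to produce at least $D$ cells $\rho\in Q$ with $z\in f_L(\mathrm{int}\,\rho)$, i.e.\ at least $D$ centrally labelled cells.

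Next I would pass to a generic value. The image under $f_L$ of the $(d-1)$-skeleton of $Q$, together with the critical values of the finitely many multilinear restrictions of $f_L$, is a set of measure zero in $C^d$; hence there is a regular value $y$ of $f_L$ arbitrarily close to $z$ whose full preimage $f_L^{-1}(y)=\{x_1,\dots,x_N\}$ is finite, lies in the interiors of $d$-cells, and satisfies
$$
\sum_{i=1}^{N}\sgn\det Df_L(x_i)=\deg(f_L)=D .
$$
The set $f_L^{-1}(z)$ is compact, misses $\partial M$ (since $z\notin\partial C^d$), and each of its points lies in the relative interior of a unique cell of $Q$, which is therefore centrally labelled; none of these points is a vertex, because $f_L$ sends vertices to vertices of $C^d$. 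For $y$ close enough to $z$, each $x_i$ is near $f_L^{-1}(z)$, and I attach to $x_i$ a centrally labelled cell $\rho_i$: if the $d$-cell $\sigma_i\ni x_i$ is itself centrally labelled put $\rho_i=\sigma_i$; otherwise letting $y\to z$ drives $x_i$ to a point of $f_L^{-1}(z)$ on $\partial\sigma_i$ lying in the relative interior of a proper face of $\sigma_i$, and I take that (centrally labelled) face for $\rho_i$.

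The crux, which I expect to be the main obstacle, is the local multiplicity bound: for every centrally labelled cell $\rho$,
$$
\Bigl|\ \sum_{i\,:\,\rho_i=\rho}\sgn\det Df_L(x_i)\ \Bigr|\le1 .
$$
Granting this, the triangle inequality yields
$$
D=\Bigl|\sum_{i=1}^{N}\sgn\det Df_L(x_i)\Bigr|=\Bigl|\sum_{\rho}\ \sum_{i\,:\,\rho_i=\rho}\sgn\det Df_L(x_i)\Bigr|\le\#\{\rho:\ \rho_i=\rho\text{ for some }i\},
$$
and the right-hand side is at most the number of centrally labelled cells of $Q$, which proves the theorem. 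To get the multiplicity bound I would localize $f_L$ to a small regular neighbourhood of $\rho$ in $M$ and read off the left-hand side as a local mapping degree there. For a $d$-cell $\rho$ this is the local degree at $z$ of a single multilinear map of a $d$-box into $C^d$, which I claim has absolute value at most $1$: when $d=2$ this is immediate, because $f_L(\partial\rho)$ is a closed curve built from only four straight segments and along each segment the direction from $z$ rotates monotonically through an angle less than $\pi$, so the total winding is less than $4\pi$; for general $d$ I would use that $\partial C^d$ has only $2d$ facets and that each coordinate of a multilinear map is monotone in each variable separately, slicing $C^d$ by a hyperplane and inducting on $d$. The contribution of a lower-dimensional face $\rho$ I would reduce to the top-dimensional case on a regular neighbourhood of $\rho$, where a ``fold'' of $f_L$ across $\rho$ contributes the preimages of $y$ in cancelling pairs. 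The careful verification of this bound when $d\ge3$, and the bookkeeping that makes the assignment $x_i\mapsto\rho_i$ well defined once $y$ is sufficiently close to $z$, are the only genuinely delicate points; the rest is standard degree theory.
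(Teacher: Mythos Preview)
The paper's argument is far shorter than yours and does \emph{not} pass to a nearby regular value or invoke any multiplicity bound. It simply uses
\[
\deg(f_L,z)=\deg(f_L)=\deg(f_L^\partial)=\deg(L,\partial Q),
\]
infers that $f_L^{-1}(z)$ contains $n\ge|\deg(L,\partial Q)|$ points, observes that none of them is a vertex, and takes for each $x_i$ the unique cell $c_i$ containing $x_i$ in its relative interior; by Definition~2 each $c_i$ is centrally labelled. That is the whole proof. The paper does not discuss whether $z$ is a regular value, and it does not argue that the cells $c_i$ are pairwise distinct.

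Your proposal is thus aiming at something stronger than what the paper actually establishes: a guarantee of $|\deg(L,\partial Q)|$ \emph{distinct} centrally labelled cells. That is a legitimate concern (the paper is silent on it), but the mechanism you propose to supply it---the local multiplicity bound---is the entire difficulty, and you do not prove it. Two concrete problems. First, the assignment $x_i\mapsto\rho_i$ via ``letting $y\to z$'' is not well defined: as $y$ varies, preimage points in a $d$-cell whose boundary also hits $z$ can drift toward different faces, merge, or split, so there is no canonical limiting face without further structure. Second, even for a top-dimensional cell $\rho$ the quantity you call ``the local degree at $z$ of a single multilinear map'' need not be independent of the direction of approach, because $z$ may lie on $f_L(\partial\rho)$ (e.g.\ in $d=2$ take the vertex bijection sending the cyclic order $(0,0),(1,0),(1,1),(0,1)$ to $(0,0),(1,1),(1,0),(0,1)$; then $f_L$ maps the edge $x_2=0$ onto the diagonal through $z$). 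Your winding argument still gives the $d=2$ bound for a \emph{fixed} regular $y$, but the inductive hint ``slice by a hyperplane and induct on $d$'' does not go through as stated: restricting a multilinear map to a hyperplane slice yields a multilinear map whose vertex images are generally interior points of $C^{d-1}$, so the inductive hypothesis does not apply. The ``fold'' cancellation for lower-dimensional $\rho$ is likewise only a gesture.

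In short: the first paragraph of your plan reproduces the paper's proof exactly; everything after that is an attempt to repair a gap the paper leaves open, and that attempt is itself incomplete at the key step.
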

\begin{proof} Let $z$ denote the center of the cube $C^d$. Then $z=(1/2,\ldots,1/2)$ is an internal point of $C^d$. Since $$\deg(f_L,z)=\deg(f_L)=\deg(f_L^\partial)=\deg(L,\partial Q),$$ the set $f_L^{-1}(z)$ consists of  $n\ge |\deg(L,\partial Q)|$ points $\{x_1,\ldots,x_n\}$ in $M$. (Moreover, the assumption $L\in\nl(\partial Q)$ guarantees that these points cannot lie on the boundary $\partial M$.) 
	
Since $x_i$ cannot be a vertex of $Q$, there is a cell $c_i\in Q$ such that $x_i$ is an internal point of $c_i$. By definition, $c_i$ is centrally labelled. So we have $n$ centrally labelled cells.    
\end{proof}

Actually, Theorem C is a two-dimensional version of Theorem 1. It is easy to see that if $L$ is a Sperner labelling of a pile $P$, then $L\in\nl(\partial P)$ and $|\deg(L,\partial P)|=1$. Thus, Theorem 1 yields Theorem B as well as Theorem C implies Theorem A. 

\medskip

Not all manifolds can be oriented. For instance, the M\"obius strip is a non-orientable manifold with boundary. Theorem 1 can be extended for the non-orientable case. This extension is based on the concept of the degree of a continuous mapping modulo 2. Let $f:M_1\to M_2$ be a continuous map between two manifolds $M_1$ and $M_2$ of the same dimension. The degree  is a number that represents the number of times that the domain manifold wraps around the range manifold under the mapping. Then $\dg2(f)$ (the degree modulo 2) is 1 if this number is odd and 0 otherwise. It is well known that $\dg2(f)$ of a continuous map $f$  is a topological invariant modulo 2. 

\begin{theorem} \label{SpQ2}  Let $Q$ be a quadrangulation of a $d$-dimensional manifold $M$. Suppose $L:V(Q)\to V(C^d)$ be a labelling such that $L\in\nl(\partial Q)$ and $\dg2(L,\partial Q)\ne 0$. Then $Q$ contains at least one centrally labelled cell.
\end{theorem}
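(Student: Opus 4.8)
\noindent The plan is to repeat the proof of Theorem \ref{SpQ} word for word, replacing the integer Brouwer degree everywhere by its reduction $\dg2$ modulo $2$; this substitution is exactly what lets the statement survive the loss of orientability. First I would apply Lemma 1 and Definition 1 to obtain the piece-wise multilinear map $f_L\colon Q\to C^d$ with $f_L(v)=L(v)$ at every vertex $v$ of $Q$. The hypothesis $L\in\nl(\partial Q)$ means precisely that $f_L(\partial Q)\subseteq\partial C^d$, so the restriction $f_L^\partial\colon\partial Q\to\partial C^d$ is defined and, reading Definition 3 modulo $2$, $\dg2(f_L^\partial)=\dg2(L,\partial Q)\ne 0$.

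The key step is the $\mathbb{Z}/2$ analogue of the boundary identity used in Theorem \ref{SpQ}: for any continuous (piece-wise smooth) map $f\colon M_1\to M_2$ between compact $d$-manifolds, \emph{not assumed orientable}, with $f(\partial M_1)\subseteq\partial M_2$, one has $\dg2(f)=\dg2(f^\partial)$. This is the usual cobordism argument for the degree carried out with $\mathbb{Z}/2$ coefficients: one needs only that the parity of $\#f^{-1}(y)$ is independent of the regular value $y$ and is invariant under homotopy, and neither fact requires an orientation; it can be established along the same lines as the oriented version recorded in \cite{MusArSp}. Applied to $f=f_L$ with $M_1=M$ and $M_2=C^d$, this yields $\dg2(f_L)=\dg2(f_L^\partial)=\dg2(L,\partial Q)\ne 0$. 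Making this $\mathbb{Z}/2$ boundary-degree identity precise in the PL / piece-wise multilinear category is, I expect, the only real obstacle; the rest is transcription.

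Finally, let $z=(1/2,\ldots,1/2)$ be the center of $C^d$. By Sard's theorem the regular values of the piece-wise smooth map $f_L$ are dense, and for each such value $y$ we have $\#f_L^{-1}(y)\equiv\dg2(f_L)\equiv 1\pmod 2$, so $f_L^{-1}(y)\ne\emptyset$. Since $f_L(M)$ is compact, hence closed in $C^d$, and $z$ is a limit of such regular values, it follows that $z\in f_L(M)$; pick $x\in M$ with $f_L(x)=z$. Because $z$ is interior to $C^d$ while $f_L(\partial Q)\subseteq\partial C^d$, the point $x$ lies in the interior of $M$, and $x$ is not a vertex of $Q$ since $f_L$ maps vertices to vertices of $C^d$, none of which equals $z$. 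Hence $x$ is an internal point of some positive-dimensional cell $c\in Q$ (if $x$ lies on the boundary of a cell, pass to the face of which it is an interior point), and by Definition 2 that cell $c$ is centrally labelled, as required.
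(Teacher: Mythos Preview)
The paper gives no explicit proof of Theorem \ref{SpQ2}; it only remarks that the result ``is based on the concept of the degree of a continuous mapping modulo $2$'' and extends Theorem \ref{SpQ} to the non-orientable case. Your proposal follows precisely this indicated route --- rerunning the proof of Theorem \ref{SpQ} with $\dg2$ in place of the integer degree --- so it agrees with the paper's intended argument. If anything, you are more careful than the paper's own proof of Theorem \ref{SpQ}: you do not assume $z$ is a regular value but instead approximate it by regular values and use compactness of $f_L(M)$, which is a cleaner way to guarantee $f_L^{-1}(z)\ne\emptyset$.
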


\medskip

\noindent{\bf Acknowledgment.} I  wish to thank Fr\'ed\'eric Meunier for references and helpful comments. 
  


\medskip

 \medskip

\noindent O. R. Musin\\ 
Department of Mathematics, University of Texas at Brownsville, One West University Boulevard, Brownsville, TX, 78520 \\
and\\
IITP RAS, Bolshoy Karetny per. 19, Moscow, 127994, Russia\\ 
{\it E-mail address:} oleg.musin@utb.edu

\end{document}